\title{Strichartz estimates on exterior polygonal domains}
\author[D. Baskin]{Dean Baskin}
\email{dbaskin@math.northwestern.edu}
\address{Mathematics Department, Northwestern University \\
Evanston, IL 60208, USA}
\author[J.L. Marzuola]{Jeremy L. Marzuola}
\email{marzuola@math.unc.edu}
\address{Mathematics Department, University of North Carolina \\
Phillips Hall, CB\#3250, Chapel Hill, NC 27599, USA}
\author[J. Wunsch]{Jared Wunsch}
\email{jwunsch@math.northwestern.edu}
\address{Mathematics Department, Northwestern University \\
Evanston, IL 60208, USA}
\renewcommand{\Im}{\operatorname{Im}}
\newcommand\bl{{\mathrm{b}}}
\newcommand\Diffb{\mathrm{Diff}_\bl}
\newcommand\Hb{H_\bl}
\newcommand{\NN}{\mathbb{N}}
\newcommand{\dom}{\mathcal{D}}
\newcommand{\Lap}{\Delta}
\newcommand{\tu}{\tilde{u}}
\newcommand{\tf}{\tilde{f}}
\newcommand{\tF}{\widetilde{F}}
\newcommand{\CI}{\mathcal{C}^\infty}
\newcommand{\jwnorm}[1]{{\left\|{#1}\right\|}}
\def\Vol{\text{Vol}}
\newcommand{\abs}[1]{{\left\lvert{#1}\right\rvert}}
\newcommand{\ang}[1]{{\left\langle{#1}\right\rangle}}
\newcommand{\smallabs}[1]{{\lvert{#1}\rvert}}
\newcommand{\norm}[2][]{\left\|#2\right\|_{#1}}
\newcommand{\pd}[1][]{\partial_{#1}}
\newcommand{\p}{\ensuremath{\partial}}
\newcommand{\lap}{\Delta}
\newcommand{\reals}{\mathbb{R}}
\newcommand{\RR}{\mathbb{R}}
\newcommand{\sphere}{\mathbb{S}}
\newcommand{\cone}{\ensuremath{C(\mathbb{S}^1_\rho)}}
\newcommand{\crc}{\mathbb{S}^1_\rho}
\newtheorem{theorem}{Theorem}
\newtheorem{lemma}{Lemma}
\newtheorem{conjecture}{Conjecture}
\theoremstyle{remark}
\newtheorem{remark}{Remark}
\thanks{The first author acknowledges the support of NSF postdoctoral
  fellowship DMS-1103436, and the third author was partially supported
  by NSF grant DMS-1001463.  The second author was partially supported
  by an IBM Junior Faculty Development Award through the University of
  North Carolina and would like to thank the University of Chicago for
  hosting him during the completion of part of this work.  The third
  author is grateful to Fabrice Planchon and John Stalker for
  a series of discussions of smoothing and Strichartz estimates on
  cones over a period of several years.  In addition, we thank Fabrice Planchon for a careful reading of the result and providing several references for the scattering theory discussion in the final section.}
\begin{document}

\begin{abstract}
  Using a new local smoothing estimate of the first and third authors,
  we prove local-in-time Strichartz and smoothing estimates without a
  loss exterior to a large class of polygonal obstacles with arbitrary
  boundary conditions and global-in-time Strichartz estimates without
  a loss exterior to a large class of polygonal obstacles with
  Dirichlet boundary conditions.  In addition, we prove a
  global-in-time local smoothing estimate in exterior wedge domains
  with Dirichlet boundary conditions and discuss some nonlinear
  applications.
\end{abstract}

\maketitle

\section{Introduction}

In this paper we prove a family of local- and global-in-time Strichartz estimates
for solutions to the Schr{\"o}dinger equation
\begin{equation}\label{eq:homSchro}
\left\{
\begin{split}
\left(D_t + \Delta\right) u(t,x) &= 0 \\
u(0,x) &= f(x),
\end{split}
\right.
\end{equation}
where $D_t=i^{-1} \partial_t,$ $\Delta$ is the negative definite
Laplace-Beltrami operator on domains of the form $X = \reals^2
\setminus P$ for $P$ any non-trapping polygonal region such that no
three vertices are collinear (as defined in the recent work of the
first and last author \cite{baskin:conedecay}), and where we take
either Dirichlet or Neumann boundary conditions for the Laplacian for the local result
and only Dirichlet boundary conditions for the global result.\footnote{The
  essential distinction between the Dirichlet and Neumann cases for our
  purposes is that low-energy resolvent estimates seem to be readily available in
  the literature in the Dirichlet case (e.g.\ in \cite{Burq-Acta-1998})  but not in the Neumann case.}
These assumptions and the resulting restrictions on allowed obstacles
$P$ are discussed in detail as Assumptions $1$, $2$ and $3$ in Section
$2$ of \cite{baskin:conedecay}, to which we refer the reader for more
details.  The main tools we require for the proof are the local
smoothing estimate on such domains (due to the first two authors) and
the Strichartz estimates on wedge domains (due to Ford
\cite{Ford:2010}).

We note here that to define the Laplacian, we use the standard Friedrichs extension, which
is the canonical self-adjoint extension of a non-negative densely defined symmetric
operator as defined in for instance \cite{Davies:1990,Blair:2012}. The Neumann Laplacian is taken
to be the usual Friedrichs extension of the Laplace operator acting on smooth functions
which vanish in a neighborhood of the vertices. The Dirichlet Laplacian is taken
to be the typical Friedrichs extension of the Laplace operator acting on smooth functions
which have compact support contained in $X$.

We now briefly discuss the geometric restrictions on $P$ needed to
apply the results of \cite{baskin:conedecay}.  In particular, we
review the non-trapping assumption on the exterior of $P$ as it is an
important restriction in all exterior domain results.  Let $P$ be a
polygonal domain in $\RR^2,$ not necessarily connected.  A
\emph{geometric geodesic} on $\RR^2\backslash P$ that does not pass
through the vertices of $P$ is defined as a continuous curve that is a
concatenation of maximally extended straight line segments in $\RR^2
\backslash P,$ such that on $\partial P,$ successive segments make
equal angles with the boundary (``specular reflection'').  More
generally, a geometric geodesic is one that may pass through the
vertices of $P$ in such a way that it is locally a uniform limit of
geometric geodesics missing the vertices.  This means that in general
such a geodesic has two possible continuations each time it hits a
vertex, corresponding to taking the limit of families approaching the
vertex from the left and right sides.

We let $B$ be a closed ball containing $P$ in
its interior.  We say that $P$ is \emph{non-trapping} if there exists
$T>0$ such that every geometric geodesic starting in $B$ leaves $B$ in time
less than $T$ (this condition is of course independent of the choice
of $B$).  We assume henceforth that $P$ is non-trapping.

In addition to assuming that $P$ is non-trapping, we also require the
assumption that no three vertices of $P$ are collinear along geometric geodesics.  We further
remark that the third assumption of \cite{baskin:conedecay}, requiring
that cone points be pairwise non-conjugate, is automatically satisfied
for the Euclidean domains under consideration here.

We recall that admissible Strichartz exponents for the Schr{\"o}dinger
equation in dimension $n=2$ are given by the following:
\begin{equation}
  \label{eq:admissible}
  \frac{2}{p} + \frac{2}{q} = 1, \quad p,q \geq 2, \quad (p,q)\neq (2,\infty).
\end{equation}
We are now ready to state the main result of this note.
\begin{theorem}
  \label{thm:strichartz}
  For any $(p,q)$ satisfying
  equation~\eqref{eq:admissible}, there is a constant $C_{p,q,T}$ so that on $I = [0,T]$
  \begin{equation*}
    \norm[L^{p}({I}, L^{q}(X))]{e^{-it\lap}f } \leq C_{p,q,T} \norm[L^{2}(X)]{f}
  \end{equation*}
  for all $u_{0}\in L^{2}(X)$.  If $X$ has Dirichlet boundary conditions, we can take $I = \RR$.
\end{theorem}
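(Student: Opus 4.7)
The plan is to prove the theorem by a partition-of-unity argument that reduces Strichartz on $X$ to Strichartz on three model geometries where the estimate is already known, with the commutator errors absorbed by the local smoothing estimate from \cite{baskin:conedecay}. First I would fix a ball $B$ containing $P$ in its interior and introduce a smooth partition $1 = \chi_\infty + \sum_k \chi_{e,k} + \sum_j \chi_{v,j}$ on $X$, where $\chi_\infty$ is supported outside $B$; each $\chi_{v,j}$ is supported in a neighborhood of a vertex $v_j$ so small that on its support $X$ coincides isometrically with an infinite wedge $W_j$; and each $\chi_{e,k}$ is supported near the interior of an edge of $P$ so that on its support $X$ is isometric to a half-plane $H_k$ (with the matching boundary condition in each case).

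For each cutoff $\chi$, the function $v = \chi u$ solves $(D_t + \Delta_M)v = -[\Delta,\chi]u$ with $v(0) = \chi f$ on the appropriate model domain $M \in \{\RR^2, H_k, W_j\}$, and the crucial feature is that $[\Delta,\chi]$ is a first-order differential operator with coefficients supported in a fixed compact subset of $X$. Duhamel's formula on $M$ reduces the problem to two contributions. The homogeneous piece $e^{-it\Delta_M}(\chi f)$ is bounded in $L^p_tL^q_x$ by the classical Euclidean Strichartz estimate on $\RR^2$, by its reflection onto the half-plane, or by the wedge Strichartz estimate of \cite{Ford:2010}. The inhomogeneous piece is controlled by pairing an inhomogeneous Strichartz estimate on $M$ with local smoothing: for any $\tilde\chi$ equal to $1$ on the support of $\chi$, the estimate of \cite{baskin:conedecay} gives
\[
\|[\Delta,\chi] u\|_{L^2([0,T], H^{-1/2}(X))} \le C_T\|f\|_{L^2(X)},
\]
because the commutator is a compactly supported first-order operator and local smoothing yields a half-derivative gain in $L^2_{t,x}$ on compact sets. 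The Keel--Tao $TT^*$ dual of the homogeneous Strichartz estimate on $M$ (equivalently, the Kato-smoothing formulation) then converts this $L^2_tH^{-1/2}$ bound into the desired $L^p_tL^q_x$ bound. Summing over the three families in the partition yields the local-in-time estimate on $[0,T]$.

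To obtain the global-in-time version in the Dirichlet case, I would upgrade the local smoothing estimate to a $T$-independent one, using the low-energy resolvent bounds of \cite{Burq-Acta-1998} as flagged in the footnote together with the high-energy propagation estimates underlying the local smoothing statement in \cite{baskin:conedecay}. Once the smoothing estimate is global, the partition-of-unity argument above applies verbatim on all of $\RR$.

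I expect the main technical obstacle to be verifying the inhomogeneous Strichartz estimate with source in $L^2_tH^{-1/2}$ on the wedge model, since \cite{Ford:2010} is stated for initial data: one must check that Ford's Hankel-transform-based proof on the cone interacts correctly with the Keel--Tao duality pairing so as to yield the required retarded estimate. A secondary concern is arranging the vertex cutoffs so that the wedge model operator truly coincides with the Friedrichs extension $\Delta$ on the support of each $\chi_{v,j}$, and ensuring that commutator errors near vertices and along edges are consistently absorbed by the single local smoothing estimate on $X$ rather than requiring separate smoothing bounds on each model.
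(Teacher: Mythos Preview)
Your approach is essentially the paper's approach: partition of unity into a Euclidean piece and vertex (wedge) pieces, write $\chi_j u$ as a solution of an inhomogeneous Schr\"odinger equation on the model space, control the commutator $[\Delta,\chi_j]u$ by local smoothing on $X$, and close with Strichartz on the models. (The paper omits your edge cutoffs by taking the vertex neighborhoods large enough to cover $X\cap B$; this is cosmetic.) Your plan for the global Dirichlet case---upgrade local smoothing on $X$ to global-in-time via Burq's low-energy resolvent bounds---is exactly what the paper does in its Theorem~\ref{thm:local-smoothing_global}.

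Where you are slightly off is in how the $L^2_t\dom_{-1/2}$ forcing is converted to Strichartz on the model. This is \emph{not} the Keel--Tao $TT^*$ dual of Strichartz (that gives $L^{\tilde p'}_tL^{\tilde q'}_x$ forcing, not $L^2_tH^{-1/2}$). The paper's mechanism is: apply the Christ--Kiselev lemma to replace $\int_0^t$ by $\int_I$, then use the \emph{dual local smoothing estimate on the wedge model} to turn $\int_I e^{is\Delta_{Z_j}} v_j(s)\,ds$ into $L^2$ data, and finally apply homogeneous Strichartz on $Z_j$. In particular, your ``secondary concern'' is in fact the heart of the matter: you \emph{do} need a separate smoothing bound on each model, and the paper devotes its Lemma~\ref{lemma:wedgesmoothing_global} to proving global-in-time local smoothing on the wedge via an explicit Morawetz-type commutator argument on $C(\mathbb{S}^1_\rho)$. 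So your identification of the main obstacle is correct, but the resolution is not a duality manipulation of Ford's Strichartz proof; it is an independent smoothing estimate on the cone.
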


\begin{remark}
Using a now standard application of the Christ-Kiselev lemma \cite{Christ:2001}, we can conclude that for a solution $u$ to the inhomogeneous Schr\"odinger IBVP
\begin{equation}\label{eq:inhomSchro}
\left\{
\begin{split}
\left(D_t + \Delta\right) u(t,x) &= F(t,x) \\
u(0,x) &= f(x)
\end{split}
\right.
\end{equation}
satisfying either Dirichlet or Neumann homogeneous boundary
conditions, the estimate
\begin{equation}\label{eq:inhomStrich}
\|u\|_{L^{p_1}(I; L^{q_1}(X))} \leq C \left( \left\| f \right\|_{L^2(X)} + \left\| F \right\|_{L^{p_2'}(I;L^{q_2'}(X))} \right)  
\end{equation}
holds for $\frac{2}{p_j} + \frac{2}{q_j} = 1$ for $j = 1,2$.  Here, $(\cdot)'$ denotes the dual exponent, e.g.\ $\frac{1}{p_1} + \frac{1}{p_1'} = 1$.  
\end{remark}

\section{Global Strichartz Estimates for the Model Problems}
\label{s:model-strichartz}

The proof of the theorem relies on Strichartz estimates on
$\reals^{2}$, as well as Strichartz estimates on a two-dimensional
cone, which we recall the result here for completeness.  Here and in
what follows, we denote by $C(\sphere^{1}_{\rho})$ the cone over the
circle of circumference $\rho$, equipped with the conic metric $dr^{2}
+ r^{2}\,d\theta^{2}$.

\begin{theorem}[Strichartz estimates on $\reals^{2}$, $C(\sphere^{1}_{\rho})$; see, e.g.,
  Keel--Tao~\cite{Keel:1998} and Ford~\cite{Ford:2010}]
  \label{thm:strichartz-R2-C}
  Suppose that $(p,q)$ and $(\tilde{p},\tilde{q})$ are admissible
  Strichartz exponents in the sense of equation~\eqref{eq:admissible}.
  If $u$ is a solution to the Schr{\"o}dinger equation
\begin{equation*}
\left\{
\begin{split}
\left(D_t + \Delta_Y \right) u(t,x) &= F(t,x), \\
u(0,x) &= f(x),
\end{split}
\right.
\end{equation*}
  on $Y = \reals^{2}$ or $C(\sphere^{1}_{\rho})$, then
  \begin{equation*}
    \norm[L^{p}({\reals}; L^{q}(Y))]{u} \leq C
    (  \norm[L^{2}]{f} + \| F \|_{L^{\tilde{p}'} (\reals; L^{\tilde{q}'} (Y)}),
  \end{equation*}
  where $\tilde{p}'$ and $\tilde{q}'$ are the conjugate exponents to
  $p$ and $q$, respectively.
\end{theorem}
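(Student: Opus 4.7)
The plan is to reduce both cases to the abstract Keel--Tao machinery \cite{Keel:1998}, which for an evolution $U(t) = e^{-it\Delta_Y}$ on an $n$-dimensional space requires only two ingredients: $L^2$ conservation of mass, and the dispersive decay
\[
\norm[L^1(Y) \to L^\infty(Y)]{U(t)} \leq C|t|^{-n/2}.
\]
Mass conservation is automatic from self-adjointness of the Friedrichs extension of $\Delta_Y$ in both cases, and once both inputs are in hand, Keel--Tao yields the full range \eqref{eq:admissible} for the homogeneous estimate; a $TT^*$ argument combined with the Christ--Kiselev lemma \cite{Christ:2001} then produces the inhomogeneous estimate with an independent admissible pair $(\tilde p, \tilde q)$. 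So the entire content of Theorem \ref{thm:strichartz-R2-C} is the two dispersive bounds with $n=2$.

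On $\reals^2$ the dispersive estimate is classical and requires no work: the propagator has Schwartz kernel
\[
K_t(x,y) = \frac{1}{4\pi i t}\, e^{i|x-y|^2/(4t)},
\]
which is pointwise bounded by $(4\pi |t|)^{-1}$. This immediately gives $\|U(t)\|_{L^1 \to L^\infty} \leq (4\pi |t|)^{-1}$, as required.

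On the cone $\cone$ the analogous dispersive bound is the main obstacle, and I would follow Ford \cite{Ford:2010}. The strategy is to build the Schwartz kernel of $e^{-it\Delta}$ via the functional calculus for the cone Laplacian: separate variables in $(r,\theta)$, diagonalize the cross-sectional Laplacian by the orthonormal basis $\{\rho^{-1/2} e^{2\pi i k \theta/\rho}\}_{k \in \mathbb{Z}}$, and on each angular mode express the propagator as a Hankel-type integral with Bessel-function kernel of order $\nu_k = 2\pi |k|/\rho$. Summing these modes produces a closed-form expression that splits naturally into a ``geometric'' contribution (accounting for direct and reflected rays) and a ``diffractive'' contribution coming from waves scattering off the cone tip. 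The geometric piece has kernel of the Euclidean form already treated, so the essential issue is to produce a uniform $|t|^{-1}$ bound on the diffractive piece. This is the hard part: one must control the series over $k$ with sharp estimates on $J_{\nu_k}$ as both the order and argument vary, which Ford carries out via stationary-phase and contour-deformation arguments on the integral representation of the Hankel transform. Once the diffractive kernel is shown to be bounded by $C|t|^{-1}$ uniformly in $(r,\theta,r',\theta')$, summing with the geometric contribution closes the dispersive estimate on $\cone$, and Keel--Tao then yields the stated range of Strichartz norms.
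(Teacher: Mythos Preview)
The paper does not actually prove this theorem: it is stated as a quoted result from the literature, with the attribution ``see, e.g., Keel--Tao~\cite{Keel:1998} and Ford~\cite{Ford:2010}'' in the theorem heading, and no proof is given. Your proposal is a correct and faithful sketch of what those references contain---the Keel--Tao abstract machinery fed by the explicit Euclidean kernel on $\reals^2$ and by Ford's dispersive bound on $\cone$ via separation of variables and the geometric/diffractive decomposition---so there is nothing to compare beyond noting that you have supplied detail the paper deliberately omits. One minor remark: the inhomogeneous estimate with an independent admissible pair is already part of the Keel--Tao output and does not require a separate appeal to Christ--Kiselev, though your route would also work away from the endpoint.
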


\begin{remark}
Our results are closely related to the work on smoothing and
Strichartz estimates for potentials with inverse-square singularities by
  Burq--Planchon--Stalker--Tahvildar-Zadeh and
  Planchon--Stalker--Tahvildar-Zadeh in \cite{BurqPlanchonST-Z:2003} and
  \cite{PlanchonST-Z:2003}.  For work on smoothing estimates for
  the Schr\"odinger equation in \emph{smooth} exterior domains, we refer the reader
  to the early works of Burq \cite{Burq:2004a}, as well as Burq--Gerard--Tzvetkov \cite{BurqGT:2004} and Anton \cite{Anton:2008} who constructed parametrices for exterior domain problems that proved Strichartz estimates with errors controlled by local smoothing estimates.  Local smoothing results were later extended by Robbiano--Zuily
  \cite{Robbiano:2009} to include quadratic potential wells.  Scale invariant Strichartz estimates for exterior domains first appeared in Planchon--Vega \cite{PlanchonVega:2009} and Blair--Smith--Sogge \cite{Blair:2010}, though not for the full range of admissible Strichartz pairs.  For Strichartz estimates exterior to a smooth, convex obstacle however, scale invariant estimates have been established in the full range of estimates in
Ivanovici \cite{Ivanovici:2010}, Ivanovici--Planchon
\cite{Ivanovici:2010a},  and
Blair \cite{Blair:2011a}.
\end{remark}

\begin{remark}
  Strichartz estimates exist for the wave equation on both $\reals^2$
  and $\cone$, but the analog of Theorem \ref{thm:strichartz} for the
  wave equation on exterior domains can be directly computed from the
  analysis done by Blair, Ford and the second author in
  \cite{Blair:2011} due to the finite speed of propagation.  Hence,
  quantifying the effects of diffraction as in \cite{baskin:conedecay}
  plays a much larger role in Schr\"odinger dynamics than in the
  corresponding wave dynamics on such domains.
\end{remark}

\section{Local Smoothing Estimates for $X$ and on Euclidean Cones}
\label{section:locsmoothing}

The proof of Theorem \ref{thm:strichartz} will also rely upon local
smoothing estimates for $\reals^2$, $C(\sphere^{1}_{\rho})$ as well as on
the space $X$ in order to glue together similar dispersive results on model
problems.  We begin with the local result that is independent of choice of
boundary conditions for $X$ as a consequence of being local-in-time (and
thus requiring no low-energy resolvent estimates):

\begin{theorem}[``Local'' local smoothing estimate; see B.--W.~\cite{baskin:conedecay}]
  \label{thm:local-smoothing_local}
  If $X$ is a domain exterior to a non-trapping polygon, $u$ is a
  solution of the Schr{\"o}dinger equation
\begin{eqnarray*}
\left\{   \begin{array}{c}
 D_t u(t,z) + \lap u(t,z) = 0, \\
    u(0,z) = f(z),
  \end{array} \right.
  \end{eqnarray*}
with Dirichlet or Neumann boundary conditions
  and $\chi \in C^{\infty}(X)$ is a smooth, compactly supported
  function, then $u$ satisfies a local smoothing estimate
  \begin{equation*}
    \norm[L^{2}({[0,T]}; \mathcal{D}_{1/2})]{\chi u} \leq C_{T} \norm[L^{2}]{f},
  \end{equation*}
  where $\mathcal{D}_{1/2}$ is the domain of $(-\lap)^{1/4}$.  
\end{theorem}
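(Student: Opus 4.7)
The plan is to derive the local smoothing estimate from a high-energy resolvent bound via a standard $TT^{*}$/Kato smoothing argument. Because the conclusion is only local in time, no low-energy resolvent information will be required, which is why the same argument handles Dirichlet and Neumann conditions uniformly (in contrast to the global-in-time result stated in Theorem~\ref{thm:strichartz}).

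First I would reduce the estimate to a resolvent bound. Regarding $Tf = \chi (-\lap)^{1/4} e^{-it\lap} f$ as an operator from $L^{2}(X)$ into $L^{2}([0,T]; L^{2}(X))$, its boundedness is equivalent to that of $TT^{*}$. Using a smooth cutoff in time to pass to $t\in\RR$ and taking Fourier transform in $t$, this reduces to a uniform bound of the form
\begin{equation*}
\sup_{\lambda \ge \lambda_{0}} \left\| \chi (-\lap)^{1/4} R(\lambda + i0) (-\lap)^{1/4} \chi \right\|_{L^{2}(X) \to L^{2}(X)} \le C,
\end{equation*}
for some $\lambda_{0} > 0$, where $R(\lambda + i0)$ is the outgoing resolvent of $-\lap$ with the chosen self-adjoint extension. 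Frequencies $\lambda < \lambda_{0}$ are absorbed trivially: unitarity gives $\|u(t)\|_{L^{2}} = \|f\|_{L^{2}}$, so spectrally localizing below $\lambda_{0}$ and integrating on the compact interval $[0,T]$ yields the desired bound, since $(-\lap)^{1/4}$ acts boundedly on that spectral range.

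Second I would invoke the propagation-of-singularities results of \cite{baskin:conedecay} to obtain the high-energy resolvent bound above. Working semiclassically at scale $h = \lambda^{-1/2}$, one tracks the semiclassical wavefront set of $R(\lambda+i0)$ applied to a compactly supported datum. In the interior of $X$ and along the smooth part of $\partial P$ this is the usual reflected bicharacteristic propagation; at each vertex of $P$, viewed as a cone point of the ambient metric, the diffractive analysis shows that the wavefront set propagates either along the geometric-geodesic continuations described in the introduction or as a strictly weaker diffracted wave. The non-trapping hypothesis then forces every bicharacteristic to leave $\operatorname{supp}\chi$ in uniformly bounded time, and a positive-commutator estimate together with an outgoing parametrix at infinity converts this escape statement into the displayed resolvent bound.

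The hard part will be exactly this diffractive propagation at the cone points and, more subtly, the possibility that successive diffracted waves refocus and reinforce one another. This is where the no-three-vertices-collinear assumption enters: it prevents a diffracted wave emanating from one vertex from refocusing at another vertex with full strength, ensuring that the diffracted contribution remains strictly subprincipal relative to the geometric one. This entire technical package is the main output of \cite{baskin:conedecay} and can be cited as a black box in our reduction. The appearance of $\mathcal{D}_{1/2}$ on the left-hand side is then automatic, being the form domain of the Friedrichs extension on which the factor $(-\lap)^{1/4}$ is naturally defined with the prescribed boundary condition.
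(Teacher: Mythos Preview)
The paper does not give its own proof of this statement: Theorem~\ref{thm:local-smoothing_local} is simply quoted from \cite{baskin:conedecay}, so there is nothing in the text to compare your argument against directly.

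That said, your outline is essentially the same machinery the paper \emph{does} spell out for the global-in-time version, Theorem~\ref{thm:local-smoothing_global}: a $TT^{*}$ reduction, Fourier transform in $t$, identification of $\widehat{T_{\pm}f}$ with the boundary values of the resolvent, and then the high-energy bound $\|\chi R(E\mp i0)\chi\|_{L^{2}\to L^{2}}\le C E^{-1/2}$ from \cite{baskin:conedecay}, upgraded by commutator/interpolation to a $\dom_{-1/2}\to\dom_{1/2}$ bound. The only structural difference is your treatment of small $E$: where the paper must invoke Morawetz and Burq to push the resolvent bound down to $E=0$ (this is exactly where Dirichlet conditions are used), you instead dispose of the spectrally low part of $f$ by the trivial estimate $\|\chi e^{-it\lap}\Pi_{<\lambda_{0}}f\|_{\dom_{1/2}}\le C_{\lambda_{0}}\|f\|_{L^{2}}$ and integration over $[0,T]$. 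That is the correct simplification, and it is precisely the mechanism behind the footnote in the introduction explaining why the local result is insensitive to the choice of boundary condition.

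One small point of care: your phrase ``smooth cutoff in time to pass to $t\in\RR$'' hides an inhomogeneous term $\phi'(t)u$, so the cleanest route is the one the paper takes for the global case---work directly with $T_{\pm}$ on data compactly supported in $t$ and use density---rather than literally cutting off the solution. This is cosmetic; your plan is sound.
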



In the case of Dirichlet boundary conditions, we can significantly strengthen the above result to gain global control
over the local smoothing norm.

\begin{theorem}[``Global'' local smoothing estimate]
  \label{thm:local-smoothing_global}
  If $X$ is a domain exterior to a non-trapping polygon, $u$ is a
  solution of the Schr{\"o}dinger equation
\begin{eqnarray*}
\left\{   \begin{array}{c}
 D_t u(t,z) + \lap u(t,z) = 0, \\
    u(0,z) = f(z),
  \end{array} \right.
  \end{eqnarray*}
with Dirichlet boundary conditions
  and $\chi \in C^{\infty}(X)$ is a smooth, compactly supported
  function, then $u$ satisfies a local smoothing estimate
  \begin{equation*}
    \norm[L^{2}(\reals; \dom_{1/2})]{\chi u} \leq C \norm[L^{2}]{f},
  \end{equation*}
  where $\dom_{1/2}$ is the domain of $(-\lap)^{1/4}$.  
\end{theorem}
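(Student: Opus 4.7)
My approach is to deduce the global-in-time local smoothing estimate from a uniform weighted resolvent bound, via a standard Kato-smoothing argument.  By the spectral theorem for the Dirichlet Laplacian $-\lap$ on $X$ together with Plancherel in the time variable, the desired estimate
\[
\|\chi u\|_{L^2(\reals; \dom_{1/2})} \leq C \|f\|_{L^2(X)}
\]
is equivalent to the uniform resolvent bound
\[
\sup_{\lambda > 0} \, \lambda^{1/2} \, \|\chi R(\lambda \pm i0) \chi\|_{L^2(X)\to L^2(X)} < \infty,
\]
where $R(z) = (-\lap - z)^{-1}$ and $R(\lambda \pm i0)$ denotes the limiting absorption boundary values of the resolvent on the positive real axis.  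I would therefore reduce matters to establishing this resolvent estimate, which splits naturally into high- and low-energy regimes.

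For $\lambda \geq 1$, the required bound $\|\chi R(\lambda \pm i0) \chi\| \lesssim \lambda^{-1/2}$ is a high-energy non-trapping resolvent estimate.  Such an estimate is already implicit in the proof of Theorem \ref{thm:local-smoothing_local} given in \cite{baskin:conedecay}: the construction there produces a uniform bound on the cut-off resolvent at high frequency via positive-commutator arguments combined with a diffractive propagation-of-singularities theorem, both of which exploit the non-trapping hypothesis in an essential way.  This portion of the argument is insensitive to the choice of boundary condition and so holds equally well for the Neumann problem.

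The essential new input is at low energies $\lambda \in (0,1]$, and this is precisely where the Dirichlet hypothesis enters, as already noted in the footnote to the introduction.  For smooth non-trapping obstacles in two dimensions, Burq \cite{Burq-Acta-1998} established cutoff low-energy resolvent bounds uniform down to zero energy; the two-dimensional case is delicate because the free resolvent is only logarithmically bounded at zero energy, but the Dirichlet boundary condition forbids a zero-energy resonance and yields a bound of the form $\|\chi R(\lambda \pm i0) \chi\| = O(|\log \lambda|)$, which is easily dominated by the required $\lambda^{-1/2}$.  The principal obstacle of the proof will be to transfer Burq's low-energy estimates from the smooth setting to the polygonal setting at hand.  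I would handle this by a second-resolvent-identity comparison with the Dirichlet Laplacian on the exterior of a smooth obstacle containing $P$ in its interior: the resolvent difference is supported away from the vertices and, together with the high-energy bounds from \cite{baskin:conedecay}, can be controlled by a compactness / Fredholm argument.  Assembling the two spectral regimes and unwinding the Plancherel reduction then yields the theorem.
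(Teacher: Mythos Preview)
Your overall architecture is exactly that of the paper: a $TT^*$/Kato-smoothing reduction via Plancherel in $t$ to a uniform bound on the cut-off resolvent $\chi R(E\pm i0)\chi$, with the high-energy input supplied by \cite{baskin:conedecay} and the low-energy input by Burq~\cite{Burq-Acta-1998}.  The paper also interpolates the $L^2\to L^2$ bound $C/\sqrt{E}$ with an $L^2\to\dom_2$ bound $C\sqrt{E}$ (obtained from the resolvent identity) to land in $\dom_{-1/2}\to\dom_{1/2}$, which is equivalent to your formulation.

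The one place you diverge is the low/intermediate-energy step.  The paper does not transfer the smooth-obstacle estimates via a second-resolvent comparison; it simply observes that the Morawetz multiplier argument \cite[Lemmas~15--16]{Morawetz:1975} and Burq's low-energy argument \cite[Appendix~B.2]{Burq-Acta-1998} are energy/integration-by-parts estimates that remain valid verbatim for Lipschitz (hence polygonal) Dirichlet boundaries, so no perturbation off a smooth model is needed.  Your proposed route---gluing to a smooth exterior domain and closing with a Fredholm/compactness argument---is a legitimate alternative and is the standard way one would proceed if the model estimates did \emph{not} carry over directly; it buys robustness at the cost of extra machinery (one must check that the error term in the resolvent identity is compact and that no spurious poles are introduced at $E=0$).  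Either approach yields the theorem, but the paper's is shorter because the cited low-energy arguments already tolerate corners.
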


\begin{remark}
  Note that Theorems~\ref{thm:local-smoothing_local}, \ref{thm:local-smoothing_global} imply the dual
  estimate
  \begin{equation}\label{dualsmoothing}
    \norm[L^{2}]{\int _{I}e^{is\lap}\chi F(s) \,ds} \leq
    C \norm[L^{2}({\RR}; \dom_{-1/2})]{F},
  \end{equation}
  for $I$ either $[0,T]$ or $\RR$ respectively.
\end{remark}

\begin{proof}
  We rely on the high-frequency resolvent estimates of the first and
  third authors~\cite{baskin:conedecay}, estimates due to
  Morawetz~\cite{Morawetz:1975} for intermediate frequencies and Burq \cite{Burq-Acta-1998} for small frequencies, then apply a now-standard $TT^{*}$ argument.

  Consider the operator $Tu_{0} = \chi e^{-it\lap}u_{0}$.  We wish to show
  that $T$ is a bounded operator from $L^{2}(X)$ to $L^{2}(\reals ;
  \dom_{1/2})$.  It suffices to show that $TT^{*}$ is bounded
  from $L^{2}(\reals ; \dom_{-1/2})$ to $L^{2}(\reals;
  \dom_{1/2})$.  The operator $TT^{*}$ is given by
  \begin{align*}
    TT^{*}f &= \chi\int_{\reals}e^{-i(t-s)\lap}\chi f(s)\,ds \\
    &= \chi\int_{s < t}e^{-i(t-s)\lap}\chi f(s)\,ds + \chi \int_{s >
      t}e^{-i(t-s)\lap}\chi f(s)\,ds = \chi T_{+}f + \chi T_{-}f.
  \end{align*}
  
  Observe that $T_{\pm}f$ are both solutions of the inhomogeneous
  Schr{\"o}dinger equation
  \begin{align*}
    D_{t} u + \lap u = \frac{1}{i}\chi f.
  \end{align*}
  Suppose for now that $f$ is compactly supported in time, i.e.,
  $f(t,x) = 0$ for $t \notin [-t_{0},t_{0}]$.  In this case, $T_{+}f$
  vanishes for $t < -t_{0}$ and $T_{-}f$ vansishes for $t >
  t_{0}$. 
  
  We wish to show that there is a constant $C$, independent of
  $t_{0}$, so that 
  \begin{equation*}
    \int _{\reals}\norm[\dom_{1/2}]{\chi T_{\pm}f(t,x)}^{2} \,dt
    \leq C \int_{\reals}\norm[\dom_{-1/2}]{f(t,x)}^{2}\,dt.
  \end{equation*}
  By Plancherel's theorem, it suffices to show that
  \begin{equation*}
    \int_{\reals}\norm[\dom_{1/2}]{\chi
      \widehat{T_{\pm}f}(E, x)}^{2}\,dE \leq C
    \int_{\reals}\norm[\dom_{-1/2}]{\hat{f}(E,x)}^{2}\,dE, 
  \end{equation*}
  where $\hat{f}$ denotes the Fourier transform of $f$ in $t$.
  
  Observe that $\widehat{T_{\pm}f}(E,x)$ solve
  \begin{equation*}
    (\lap + E) \widehat{T_{\pm}f} = \frac{1}{i}\chi \hat{f}.
  \end{equation*}
  Moreover, the condition on the support of $f$ implies that
  $\widehat{T_{+}f}$ is holomorphic in the lower half-plane, while
  $\widehat{T_{-}f}$ is holomorphic in the upper half-plane.  In
  particular, if $R(z) = (\lap + z)^{-1}$ where it is invertible,
  \begin{equation*}
    \widehat{T_{\pm}f}(E,x) = \lim _{\beta \downarrow 0} R(E \mp
    i\beta) \left( \frac{1}{i}\chi \hat{f}(E,x)\right).
  \end{equation*}

  We must thus estimate $\chi R(E\mp i 0) \chi$ as an operator
  $\dom_{-1/2}\to \dom_{1/2}$.  The high-frequency
  estimates from~\cite{baskin:conedecay} imply that there is some $E_{0}$ so
  that for $E \geq E_{0}$,
  \begin{equation*}
    \norm[L^{2}\to L^{2}]{\chi R(E\mp i0)\chi} \leq \frac{C}{\sqrt{E}}.
  \end{equation*}
  Using this bound and the identity
  \begin{equation*}
    \lap \chi R(E\pm i\beta) = \chi - (E\pm i\beta)\chi R(E\pm i\beta)
    + [\lap, \chi]R(E\pm i\beta)
  \end{equation*}
  yields the following high-energy estimate for the resolvent:
  \begin{equation*}
    \norm[L^{2}\to \dom_{2}]{\chi R(E\pm i0)\chi} \leq C\sqrt{E}.
  \end{equation*}
  Interpolating the two estimates shows that $R(E\pm i0)$ is bounded
  (with uniform bound for $E \geq E_{0}$) as an operator from $L^{2}$
  to $\dom_{1}$ and thus from $\dom_{-1/2}$ to
  $\dom_{1/2}$.

  The argument of Morawetz~\cite[Lemmas 15 and 16]{Morawetz:1975},
  which remains valid in our setting, shows that the same bound holds
  at intermediate energies as well.  For uniform bounds down to $E = 0$, 
  we rely upon an argument of Burq~\cite[Appendix B.2]{Burq-Acta-1998}.

  We now apply the resolvent estimates, which shows that there is a
  constant $C$ independent of $\lambda$ and $t_{0}$ so that
  \begin{equation*}
    \norm[\dom_{1/2}]{\widehat{\chi T_{\pm}f(E,x)}}^{2} \leq C \norm[\dom_{-1/2}]{f}^{2}.
  \end{equation*}
  Integrating in $E$ then finishes the proof in the compactly
  supported setting.  For the general setting, we simply note that the
  constant is independent of the support and that compactly supported
  functions are dense in $L^{2}_{t}$.
\end{proof}

We will need one result that we have not been able to find explicitly
in the literature, but whose proof uses standard methods.  This result
concerns global-in-time local smoothing the Schr\"odinger equation on an infinite
wedge domain, which of course serves as a local model for our polygon
near a vertex and is equivalent to $C(\sphere^{1}_{\rho})$ (Cf.\
\cite{HHM}, for instance).  Let $X_\rho=\{\theta \in [0, \rho/2]\}
\subset \RR^2$ for $\rho\in [0, 4\pi).$

\begin{lemma}\label{lemma:wedgesmoothing_global}
A solution to the Schr\"odinger equation on $X_\rho$ with Dirichlet or
Neumann conditions satisfies the local smoothing estimates of
Theorem~\ref{thm:local-smoothing_global}.  Consequently, the dual estimate
\eqref{dualsmoothing} is satisfied on $X_\rho$ as well.
\end{lemma}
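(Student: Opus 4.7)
The plan is to mimic the proof of Theorem \ref{thm:local-smoothing_global}, with the full cone $C(\sphere^{1}_\rho)$ playing the role of the exterior polygonal domain $X$. The wedge $X_\rho$ with Dirichlet (resp.\ Neumann) boundary conditions embeds into $C(\sphere^{1}_\rho)$ via odd (resp.\ even) angular reflection across its two boundary edges, and the Friedrichs Laplacian commutes with this reflection, so any local smoothing estimate for $C(\sphere^{1}_\rho)$ restricts immediately to one on the wedge. Setting $Tu_{0} = \chi e^{-it\lap}u_{0}$ and transforming in time, the $TT^{*}$ argument reduces the target bound to a uniform resolvent estimate
\[
\smallnorm{\chi R(E\pm i0)\chi}_{\dom_{-1/2}\to \dom_{1/2}} \leq C, \qquad E \in (0,\infty),
\]
where $R(z) = (\lap+z)^{-1}$ on the cone, exactly as in the previous proof.

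I would verify this resolvent bound in three frequency regimes. At high energies, the estimate $\smallnorm{\chi R(E\pm i0)\chi}_{L^{2}\to L^{2}} \leq C/\sqrt{E}$ for $E \geq E_{0}$ is the simplest case of the non-trapping resolvent bounds of~\cite{baskin:conedecay}, and is upgraded to the required $\dom_{\pm 1/2}$ bound via the identity $\lap\chi R = \chi - E\chi R + [\lap,\chi]R$ and interpolation, exactly as in Theorem~\ref{thm:local-smoothing_global}. At intermediate energies, the Morawetz positive-commutator argument used there applies verbatim, since the cone carries the exact scaling field $r\pd[r]$, which is the canonical commutant for such estimates.

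The step I expect to be the main obstacle is the uniform low-energy bound as $E \downarrow 0$. The plan is to exploit the exact separation of variables on the cone: decomposing in eigenmodes of the Laplacian on $\sphere^{1}_\rho$ diagonalizes $R(E\pm i0)$ as a direct sum of Bessel operators whose Schwartz kernels are given in closed form by products of $J_\nu$ and $H^{(1)}_\nu$, from which the low-frequency asymptotics of $\chi R(E\pm i0)\chi$ can be read off directly. What one must check is the absence of a zero-energy resonance or $L^{2}$ eigenvalue in the angular subspaces corresponding to the two boundary conditions; this is immediate from the explicit classification of bounded harmonic functions on a Euclidean cone for either Dirichlet or Neumann data, so — unlike in the general polygonal case — no Burq-type low-energy argument is required here, and the Neumann case is covered without extra work. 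The dual estimate~\eqref{dualsmoothing} on $X_\rho$ then follows by duality, as in the remark after Theorem~\ref{thm:local-smoothing_global}.
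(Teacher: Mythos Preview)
Your reduction to the cone via odd/even reflection matches the paper, but from there the approaches diverge. The paper does not pass through resolvent estimates at all: it proves the smoothing bound directly on $\cone$ by a positive-commutator argument with the explicit antisymmetrized multipliers $A=\pd[r]+\tfrac{1}{2r}$ and $B=\tfrac{r}{\langle r\rangle}\pd[r]+\tfrac{2+r^2}{2\langle r\rangle^3}$, computing $[A,-\lap_{\cone}]$ and $[B,-\lap_{\cone}]$ by hand and then splitting into angular Fourier modes $u_j$. High angular modes are controlled by $B$ alone; nonzero low modes by $B+\epsilon^{-1}A$; and the zero mode by $B-\epsilon^{-1}A$, where a crucial sign flip in $\langle[A,-\lap]u_0,u_0\rangle$ makes the combination positive. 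This is self-contained and elementary---no resolvent limits, no low-energy analysis, no appeal to \cite{baskin:conedecay} or Burq---which is exactly why the lemma covers Neumann data even though Theorem~\ref{thm:local-smoothing_global} does not.

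Your $TT^*$/resolvent route is reasonable for Dirichlet data, but the low-energy step has a gap in the Neumann case. You claim the absence of a zero-energy resonance is ``immediate from the explicit classification of bounded harmonic functions on a Euclidean cone for either Dirichlet or Neumann data,'' but for Neumann the even extension to $\cone$ contains the constant function, which \emph{is} a bounded harmonic function and hence a genuine threshold resonance, just as on $\RR^2$. Concretely, the zero angular mode has Bessel index $\nu=0$, and $\chi R(E\pm i0)\chi$ blows up logarithmically in the $L^2\to L^2$ norm as $E\downarrow 0$. The $\dom_{-1/2}\to\dom_{1/2}$ bound you actually need may still hold (it does on $\RR^2$), but establishing it requires a real computation with the $J_0$/$H_0^{(1)}$ kernel and the derivative weights, not merely a resonance check; this is precisely the delicate low-energy issue the paper's direct multiplier argument sidesteps. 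A secondary point: \cite{baskin:conedecay} is written for exterior polygonal domains rather than a single isolated cone, so your high-energy citation would need adjustment, though the estimate itself is certainly available (and easier) in this model geometry.
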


\begin{proof}
We first note that
solutions to the Schr\"odinger equation on $X_\rho$ with Dirichlet boundary conditions are equivalent, by extending in an odd manner to the cone over the circle of circumference $\rho$ obtained by
``doubling'' the wedge $X_\rho .$  That is to say, we may identify solutions
to the Schr\"odinger equation on $\RR\times X_\rho$ to 
solutions on the ``edge manifold'' $\RR\times \cone,$ where as usual $\cone$ has
the metric $ds^2 = dr^2 + r^2 d\theta ^2$
with $\theta \in \mathbb{S}^1_\rho.$  We make this identification by
extending the the solutions to be odd or even
under the involution
$$
\mathbb{S}^1_\rho\ni \theta \to \rho-\theta
$$
according to the choice of boundary condition.
Thus, it will suffice to consider solutions to the Schr\"odinger equation
on the cone,
\begin{equation}\label{eq:coneSchropol}
\left\{
\begin{split}
\big(D_t + \partial_r^2 + r^{-1} \partial_r  & + r^{-2} \Delta_{\crc} \big) u(t,x) = 0 , \\
u(0,x) &= f(x).
\end{split}
\right.
\end{equation}
We refer the reader to \cite{Ford:2010,Blair:2012,Blair:2011} as well
as \cite{Melrose:2004}
for a discussion of Sobolev spaces on cones and the
nature of the operator $\Delta_{\cone}$.   In particular, we briefly
recall the characterization given in \cite{Melrose:2004} of the nature
of the domains of powers of the Friedrichs Laplacian on the
cone.\footnote{As discussed in Remark 1.2\cite{Blair:2012} we take the
  Neumann Laplacian on the original planar domain to be the Friedrichs
  extension from smooth functions vanishing at the vertex or vertices,
  satisfying Neumann conditions at edge; thus, upon doubling to a cone, we
  are working with the Friedrichs extension of the Laplacian on smooth
  functions compactly supported away from the cone tip.}
First we recall the definition of b-vector fields and operators.
The space of \emph{b-vector fields}, denoted $\mathcal{V}_{b}(\cone)$ is the
vector space of vector fields on $[0,\infty)\times\mathbb{S}^1_\rho$
tangent to $0 \times \mathbb{S}^1_\rho$.  In local coordinates $(r,\theta)$ near $\pd M$,
they are generated over $C^{\infty}([0,\infty) \times \mathbb{S}^1_\rho)$ by the vector fields
$r\partial_r$ and $\partial_\theta$.  One easily verifies that $\mathcal{V}_{b}(\cone)$ forms a Lie
algebra.  The set of b-differential operators, $\Diffb^{*}(\cone)$, is the
universal enveloping algebra of this Lie algebra:
it is the filtered algebra consisting of operators of the form
\begin{equation}\label{exampleboperator}
A=\sum_{\smallabs{\alpha}+j\leq m} a_{j,\alpha}(r,\theta) (r D_r)^j
D_\theta^\alpha \in \Diffb^m(\cone).
\end{equation}
Now let $L^2_b(\cone)$ be the space of square-integrable functions
with respect to the ``b-density'' $r^{-1} \, dr\, d\theta.$  We define
the b-Sobolev spaces $\Hb^m(\cone)$ for $m \in \NN$ as
$$
\big \{u : \Diffb^m(\cone): u \to L^2_b(\cone)\big\}.
$$
This definition can be extended to a definition of $\Hb^s(\cone)$ for
$s \in \RR$ by interpolation and duality, or, better yet, by
developing the b-pseudodifferential calculus as in \cite{Melrose:APS}.

Finally, we recall that Lemma~3.2 of \cite{Melrose:2004} tells us
that if we let $\dom_s$ denote the domain of $\Lap_{\cone}^{s/2}$ (again, with
$\Lap_{\cone}$ denoting the Friedrichs Laplacian) then
$$
\dom_s=r^{-1+s}\Hb^s(\cone),\quad \abs{s}<1.
$$
This identification does break down at $s=1$---see \S3 of
\cite{Melrose:APS} for details.

Finally we are ready to prove a local smoothing estimate.
We will prove an estimate of the form
\begin{equation}
  \label{eq:conesmoothing-redux}
  \int_{0}^{\infty}\left( \norm[L^{2}]{\langle r\rangle
      ^{-\frac{3}{2}} \pd[r] u}^{2} +
    \norm[L^{2}]{r^{-\frac{3}{2}}\sqrt{-\lap_{\sphere^{1}_{\rho}}}
      u}^{2}\right) \,dt \leq C\norm[\dom_{1/2}]{f}^{2} .
\end{equation}
Taken together with its time-reversed version, this will yield the
estimate
$$
\jwnorm{\chi u}_{L^2(\RR; \dom_{1})} \leq C \jwnorm{f}_{\dom_{1/2}}
$$
(indeed it is somewhat stronger than the needed estimate, being global
in space, with weights, rather than local).  Now to obtain the
estimate stated in the theorem, we simply shift Sobolev exponents by
applying this estimate to the solution $\ang{\Lap}^{-1/4} u.$  Thus to
prove the lemma it
will suffice to obtain \eqref{eq:conesmoothing-redux}.

To do this, we separate variables and treat the small and large angular
frequencies separately.

Let us first introduce the commutants $A_{0}$ and $B_{0}$, given by
\begin{equation*}
  A_{0}= \pd[r], \quad B_{0} = \frac{r}{\langle r \rangle} \pd[r]
\end{equation*}
and observe that acting on smooth functions compactly supported away
from the cone tip,
\begin{align*}
  [A_{0}, -\lap_{C(\sphere^{1}_{\rho})}] &= \frac{1}{r^{2}}\pd[r] +
  \frac{2}{r^{3}}\pd[\theta]^{2}, \\
  [B_{0}, -\lap_{C(\sphere^{1}_{\rho})}] &= 2\left( \frac{1}{\langle
      r\rangle^{3}}\pd[r]^{2} + \frac{1}{r^{2}\langle
      r\rangle}\pd[\theta]^{2}\right) + \frac{1}{r\langle r
    \rangle}\left( 1 + \frac{1}{\langle r\rangle^{2}} -
    \frac{3r^{2}}{\langle r\rangle^{4}}\right)\pd[r].
\end{align*}
Note that the formal adjoints (with respect to the usual volume form)
are given by
\begin{align*}
  A_{0}^{*} &= -\pd[r] - \frac{1}{r}, \\
  B_{0}^{*} &= -\frac{r}{\langle r \rangle}\pd[r] - \frac{2 +
    r^{2}}{\langle r \rangle ^{3}}.
\end{align*}
Now setting $A = (A_{0}-A_{0}^{*})/2$ and $B = (B_{0} - B_{0}^{*})/2$,
we have
\begin{equation}\label{commutators}
\begin{aligned}
  \left[ A, - \lap_{C(\sphere^{1}_{\rho})}\right] &=
  \frac{2}{r^{3}}\pd[\theta]^{2} + \frac{1}{2r^{3}}, \\
  \left[ B, -\lap_{C(\sphere^{1}_{\rho})}\right] &= -2\pd[r]^{*}
  \langle r \rangle ^{-3} \pd[r] + 2\langle r\rangle
  ^{-1}r^{-2}\pd[\theta]^{2} + g,
\end{aligned}
\end{equation}
with
\begin{equation*}
g(r) = \frac{r^{4}+8r^{2}-8}{2\langle r \rangle ^{7}}.
\end{equation*}

Now consider a function $u \in \CI(\RR; \CI_c((0,\infty)\times
\mathbb{S}^1_\rho)$ with
\begin{equation}\label{inhomog}
(D_t+\Lap_{\cone})u=F,\ u(0,x)=f.
\end{equation}

We will separate $u(t)$ in angular modes, preserving a
single high-angular-frequency component and separating out all low-angular-frequency
components: given $J,$ we let $e_j(\theta)=e^{2\pi i j \theta/\rho}$
and we Fourier analyze $u$ into
$$
u=\sum_j a_j (t,r) e_j(\theta)\equiv \sum u_j(t,r,\theta).
$$
We then split
$$
u= \tu + \sum_{\abs{j}<J} u_j(t,r)
$$
with
$$
\tu= \sum_{\abs{j}\geq J} a_j (t,r) e_j(\theta)
$$
denoting the high angular frequencies.  Let $\tF,$ $\tf$ etc.\ denote
the corresponding decompositions of $F,$ $f.$

Now for $j$ sufficiently large,
$$
\ang{-\partial_\theta^2 u_j,u_j} \gg 1,
$$
and hence by \eqref{commutators}, there exists $c>0$ with
\begin{equation}\label{highfreqbound}
 \ang{[B, \Delta_{\cone}] \tu,\tu}  \geq c\big(  \norm[L^{2}]{\langle r\rangle
      ^{-\frac{3}{2}} \pd[r] u}^{2} +
    \norm[L^{2}]{r^{-\frac{3}{2}}\sqrt{-\lap_{\sphere^{1}_{\rho}}}
      u}^{2}\big).
\end{equation}
Thus, we compute
$$
i^{-1} \partial_t\ang{B\tu,\tu} = \ang{[B,-\Delta_{\cone}]\tu,\tu}+2 \Im \ang{\tF,B\tu}.
$$
Integration then yields
\begin{align*}
 \int_0^T \langle [B, \Delta_{\cone}] \tu, \tu \rangle dt
& \leq
  \abs{\ang{B\tu(T),\tu(T)}} +\abs{\ang{B\tu(0),\tu(0)}} \\
  & \hspace{1.0cm} + 2\int_0^T
  \abs{\ang{\tF,B\tu}}\, dt.
\end{align*}
Since $B \in \Diffb^1(\cone),$ we certainly have
$$
B: \dom_{1/2}\to \dom_{-1/2}=\dom_{1/2}^*,
$$
by the identification $\dom_{\pm 1/2}=r^{-1\pm 1/2} \Hb^{\pm 1/2}(\cone).$
Thus, for all $T,$
\begin{align*}
\int_0^T \big(  \norm[L^{2}]{\langle r\rangle
      ^{-\frac{3}{2}} \pd[r] u}^{2} &+
    \norm[L^{2}]{r^{-\frac{3}{2}}\sqrt{-\lap_{\sphere^{1}_{\rho}}}
      u}^{2}\big) \, dt \leq  \\
      & c \int_0^T \jwnorm{u}_{\dom_{1/2}}
    \jwnorm{F}_{\dom_{1/2}}\, dt+\jwnorm{f}_{\dom_{1/2}}^2
+ \jwnorm{u(T)}_{\dom_{1/2}}^2.
\end{align*}
An elementary energy estimate for the inhomogeneous equation shows
that since $e^{-it\Lap}: \dom_s\to \dom_s$ for all $s,$ for all $t>0$
we have
$$
 \jwnorm{u(t)}_{\dom_{1/2}}\leq  \jwnorm{f}_{\dom_{1/2}}+\int_0^\infty
 \jwnorm{F(s)}_{\dom_{1/2}} \, ds.
$$
Thus for all $T>0,$
\begin{equation}\label{commutatorestimate1}
\int_0^T \big(  \norm[L^{2}]{\langle r\rangle^{-\frac{3}{2}} \pd[r] u}^{2} +
    \norm[L^{2}]{r^{-\frac{3}{2}}\sqrt{-\lap_{\sphere^{1}_{\rho}}}
      u}^{2}\big) \, dt \leq c\big(\int_0^\infty
    \jwnorm{F}_{\dom_{1/2}}\big)^2 + c\jwnorm{f}_{\dom_{1/2}}^2,
\end{equation}
with the constant $c$ independent of $T.$
Thus, the map
$$
(f,F) \in \dom_{1/2}\oplus L^1(\RR; \dom_{1/2}) \to u
$$
extends by continuity to yield \eqref{commutatorestimate1} in
particular\footnote{The reader may note that in obtaining these estimates
  we have not pursued optimality in $F:$ the solution $u$ should be one
  derivative more regular.  We have avoided this issue owing to the
  breakdown of the identification of the domain $\dom_1$ with a weighted
  b-Sobolev space (principally relevant in our analysis of the
  zero-angular-mode below).  We of course obtain the correct mapping
  properties of the inhomogeneous Schr\"odinger equation from $L^2
  \dom_{-1/2}$ to $L^2 \dom_{1/2}$ ex post facto by duality.}  with any $f
\in \dom_{1/2}$ and $F=0.$ Letting $T\to \infty,$ this is the desired estimate for the $\tu$ term.

We now turn to the $u_j$ terms, for which we will need the commutant
$A$ as well.  We treat the cases $j \neq 0$ and $j=0$ separately.  For
$j \neq 0$ we note that $\ang{\partial_\theta^2 e_j,e_j}<-1/4$ owing
to our assumption that $\rho<4\pi.$  Thus
$$
\ang{[A, -\Delta_{\cone}] u_j,u_j} \geq c (\jwnorm{r^{-3/2} \partial_\theta
  u_j}^2_{L^2}+\jwnorm{r^{-3/2} u_j}^2_{L^2}).
$$
Consequently, using the commutant $B+\epsilon^{-1} A$ in the argument
above with $\epsilon$ sufficiently small formally yields the desired estimate on
$u_j,$ for $j \neq 0.$  Note that it is essential that $A \in r^{-1}
\Diffb^1(\cone),$ hence $A: \dom_{1/2}\to \dom_{-1/2}=\dom_{1/2}^*$
so that we may proceed just as above.

Finally, we deal with $j=0.$  Here we employ the commutant $B-\epsilon^{-1} A,$ where we
remark that on $u_0,$ a crucial sign flips and we have
$$
\ang{[A,- \Delta_{\cone}] u_0,u_0} \leq -c (\jwnorm{r^{-3/2} \partial_\theta
  u_0}^2_{L^2} +\jwnorm{r^{-3/2} u_0}^2_{L^2})
$$
(where of course we really have $\partial_\theta u_0=0$).
\end{proof}

\begin{remark}
  The above proof, while appealingly simple, does not extend to the case of a slit
  obstacle, i.e.\ $\rho = 4 \pi$ or indeed to any product cone in which
  $-1/4$ is in the spectrum of the Laplacian on the link.  However, addition of a third operator,
\[
W_0 = f( r) \partial_r ,
\]
with $f$ to be determined allows these cases to be handled in the same fashion as above.  
We calculate in general:
\[
W_0^*  = - W_0 - \frac{f( r)}{r} - f' ( r),
\]
giving the multiplier
$$
W  = \frac{ W_0 - W_0^*}{2} = f( r) \p_r + \frac{f(  r)}{2r} + \frac{f' ( r)}{2}.
$$
Thus,
\begin{align*}
\ang{[W, -\Delta_{\cone}] u,u} 
& = - \ang{ 2  f' ( r) \p_r u, \p_r u} + \ang{\frac{ f'' ( r)}{r} u, u}  +   \ang{\frac{ f''' ( r)}{2} u, u} \\
& \hspace{1.0cm} -  \ang{\frac{ f' ( r)}{2 r^2} u, u} .
\end{align*}
Consequently, provided
\begin{itemize}
\item $f \in C^3$ with uniformly bounded derivatives, 

\item $f' < 0$,

\item $f'' (r) / r + f''' (r) /2 - f' (r) /( 2 r^2) > 0$
\end{itemize}
we obtain an estimate.  
In particular, taking
$ f(s) = (1 + r)^{-\frac12}$ 
gives a positive operator satisfying bounds as in \eqref{eq:conesmoothing-redux} with slightly different weights,
\begin{equation}
  \label{eq:conesmoothing-redux-slit}
  \int_{0}^{\infty}\left( \norm[L^{2}]{ (1+r)^{-\frac34} \pd[r] u}^{2} +
    \norm[L^{2}]{r^{-1} \langle r \rangle^{-\frac{3}{4}}\sqrt{-\lap_{\sphere^{1}_{\rho}}}
      u}^{2}\right) \,dt \leq C\norm[\dom_{1/2}]{f}^{2} 
\end{equation}
on this mode.  Note, we have made no attempt to optimize weights here.
\end{remark}

\section{Proof of Theorem~\ref{thm:strichartz}}

We are now ready to prove the main result, which follows from the same arguments whether $I = [0,T]$ or $\RR$.  Suppose that the polygon is contained in a ball of radius $R_{0}$ in
$Z_{0} = \reals^{2}$ and let $U_{0} = \reals^{2}\setminus
\overline{B(0,R_{0})}$.  For each vertex of the polygon, we let
$U_{j}$ be a neighborhood of that vertex in $X$ so that $U_{j}$ can
be considered as a neighborhood of the cone point in a wedge domain
$Z_{j}$ given by $\{\theta \in [0, \rho/2]\} \subset \RR^2.$  We may assume that the union of the $U_{j}$ covers
$X\setminus U_{0}$.  Let $\chi_{0}, \chi_{1}, \ldots , \chi_{N}$ be a
partition of unity subordinate to this cover of $X$.

Set $u$ to be the solution of the Schr{\"o}dinger equation with
initial data $f$, i.e.,
\begin{equation*}
  \left\{
    \begin{split}
      \left(D_t + \lap \right) u &= 0, \\
      u(0,z) &= f(z).
    \end{split}
  \right.
\end{equation*}
Consider now $u_{j}=\chi_{j}u_{j}$.  Note that $u_{j}$ solves the
following inhomogeneous Schr{\"o}dinger equation on $Z_{j}$:
\begin{equation*}
  \left\{
    \begin{split}
      D_{t} u_{j}+ \lap u_{j} &= [\lap, \chi_{j}]u , \\
      u_{j}(0,z) &= \chi_{j}(z)f(z).
    \end{split}
  \right.
\end{equation*}
We write $u_{j} = u_{j}' + u_{j}''$, where $u_{j}'$ is the solution
of the homogeneous equation on $Z_{j}$ with the same initial data
and $u_{j}''$ is the solution of the inhomogeneous equation with
zero initial data.  We know by \cite{Keel:1998} (for $Z_{0}$) and by
\cite{Ford:2010} (for $Z_{j}$, $j > 0$) that $u_{j}$ satisfies the
homogeneous Strichartz estimate.  

We now set $v_{j}(t,z) = [\lap, \chi_{j}]u.$  Then by Duhamel's Principle,
\begin{equation*}
  u_{j}'' = \int_{0}^{t}e^{-i(t-s)\lap_{Z_{j}}} v_{j}(s) \, ds. 
\end{equation*}
Note that $[\lap, \chi_{j}]$ is a compactly supported differential
operator of order $1$ supported away from the vertices and so the
local smoothing estimate for $X$ implies that there is a constant
$C$ so that
\begin{equation*}
  \norm[L^{2}({\RR}, \dom_{-1/2})]{v_{j}} \leq C\norm[L^{2}(X)]{f}.
\end{equation*}
We wish to show that $u_{j}''$ obeys the Strichartz estimates.  As
we are assuming that $p > 2$, the Christ--Kiselev
lemma~\cite{Christ:2001} implies that it is enough to show the
estimate for
\begin{equation*}
  \int_{\RR}e^{-i(t-s)\lap_{Z_{j}}}v_{j}(s) \, ds =
  e^{-it\lap_{Z_{j}}} \int_{\RR}e^{is\lap_{Z_{j}}}v_{j}(s) \, ds.
\end{equation*}
By the dual local smoothing estimate for $Z_{j}$ from Lemma~\ref{lemma:wedgesmoothing_global},
\begin{equation*}
  \norm[L^{2}]{\int_{\RR}e^{is\lap_{Z_{j}}} v_{j}(s)} \leq
  C\norm[L^{2}({\RR};\dom_{-1/2})]{v_{j}} \leq C\norm[L^{2}]{f}.
\end{equation*}
Applying the homogeneous Strichartz estimate for the propagator then
finishes the proof.

\section{A brief comment on nonlinear applications}

Let $X = \reals^2 \setminus P$, for $P$ a non-trapping polygonal domain with
either Dirichlet or Neumann boundary conditions on each edge.  We use our
loss-less local in time Strichartz estimates to extend well-posedness
results to problems of the form
\begin{eqnarray}
\label{e:nls}
D_t u + \lap u = \beta (|u|^2 ) u, \ \ u_0 \in H^s (X)
\end{eqnarray}
for some $s \geq 0$, where $\beta$ is any polynomial such that $\beta (0) = 0.$ For a more
general set of assumptions on $\beta$, see for instance the
treatments in the books of Cazenave \cite{Cazenave} and Tao \cite{Tao:2006a}
and the references therein, as well as a succinct exposition in the
recent survey article of D'Ancona \cite{DAncona}.  Largely, what follows will mirror the discussions in $\reals^2$ from \cite{Cazenave}.  The arguments throughout implicitly rely upon
Sobolev embeddings and Gagliardo-Nirenberg inequalities extending from $\reals^2$ to $X$.  
Note, the nonlinear Schr\"odinger equation \eqref{e:nls} has conservation of mass
\begin{align}
\label{e:mass}
M( u) & = \| u \|_{L^2 (X) } \\
& \hspace{.25cm} = M(u_0) 
\end{align}
and conservation of energy
\begin{align}
\label{e:energy}
E( u) & =  \| \nabla u \|_{L^2 (X)} + \int_X B(u^2) d \Vol_X  \\
& \hspace{1cm} = E(u_0), \notag  
\end{align}
where
\[
B (z) =  \int_0^z \beta(y) dy.
\]
These conservation laws that are quite useful for the study of well-posedness in that they allow one to control the $L^2$ and $H^1$ norms of the solution for a variety of nonlinearities $\beta$.

\subsection{Local Nonlinear Results}
The main results are of the following form.  

\begin{theorem}
\label{t:locwp}
Let $0 \leq \sigma < 1$.  Given $\| u_0 \|_{H^\sigma} < M$, there exists a $T_{max}(M)$, $T_{min} (M)
\leq \infty$ such that \eqref{e:nls} has a unique solution $u \in C(
(-T_{min},T_{max}), H^s) \cap L^q_{(-T_{min},T_{max})} L^r (X)$ with
continuous dependence upon the initial data.  In addition, if $T_{max}
\  (T_{min} ) \ < \infty$, then $\| u \|_{H^s} \to \infty$ as $t \to
T_{max}$ ($T_{min}$).  In particular, for $L^{2}$-subcritical
nonlinearities, $T_{\max} =T_{\min} = \infty$.
\end{theorem}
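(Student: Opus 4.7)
The plan is to run a standard Kato contraction mapping argument on the Duhamel form of \eqref{e:nls},
\[
\Phi_{u_0}(u)(t) = e^{-it\lap} u_0 - i\int_0^t e^{-i(t-s)\lap} \beta(\abs{u(s)}^2) u(s)\, ds,
\]
with the Strichartz estimates of Theorem~\ref{thm:strichartz} (and their Christ--Kiselev-dualized inhomogeneous form \eqref{eq:inhomStrich}) taking the place of the usual flat-space estimates. First I would fix an admissible Strichartz pair $(p,q)$ from \eqref{eq:admissible} chosen according to $\sigma$ and $\deg \beta$, and work in a ball of the resolution space
\[
Y_T = C([-T,T]; H^\sigma(X)) \cap L^p([-T,T]; L^q(X)),
\]
upgraded to carry $\sigma$ spatial derivatives in the $L^p L^q$ piece when $\sigma>0$.

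The mechanism of the contraction is classical: the homogeneous Strichartz estimate controls $\jwnorm{e^{-it\lap} u_0}_{Y_T}$ by $\jwnorm{u_0}_{H^\sigma}$, while the dual inhomogeneous estimate bounds the Duhamel term by the $L^{\tilde{p}'}L^{\tilde{q}'}$ norm of $\beta(\abs{u}^2)u$. Expanding the polynomial nonlinearity and applying H\"older, together with a Sobolev embedding $H^\sigma(X) \hookrightarrow L^r(X)$ for the appropriate $r$, then yields a bound of the form
\[
\jwnorm{\Phi_{u_0}(u)}_{Y_T} \leq C\jwnorm{u_0}_{H^\sigma} + C T^{\theta}\jwnorm{u}_{Y_T}^{k},
\]
for some $\theta>0$ and $k>1$ determined by $\deg \beta$ and $\sigma$, and analogously for the difference $\Phi_{u_0}(u) - \Phi_{u_0}(v)$. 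Taking $T$ small depending only on $\jwnorm{u_0}_{H^\sigma}$ produces a contraction on a closed ball of $Y_T$, giving local existence, uniqueness, and continuous dependence on the initial datum. The blow-up alternative follows by contradiction: if $\jwnorm{u(t)}_{H^\sigma}$ remained bounded as $t \uparrow T_{\max}$, one could restart the fixed-point argument at $T_{\max} - \varepsilon$ with the same lifespan bound and extend past $T_{\max}$. For $L^2$-subcritical $\beta$, specializing to $\sigma = 0$ and invoking mass conservation \eqref{e:mass} yields the uniform bound $\jwnorm{u(t)}_{L^2} = \jwnorm{u_0}_{L^2}$, so the blow-up alternative forces $T_{\max} = T_{\min} = \infty$.

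The main obstacle I anticipate is not the iteration scheme, which largely transcribes the $\reals^2$ treatment of \cite{Cazenave}, but the underlying fractional Sobolev calculus on $X$. For $\sigma>0$ one needs Sobolev embeddings and H\"older-type product estimates for $H^\sigma(X)$ defined spectrally through the Friedrichs Laplacian. Away from the vertices the Friedrichs extension agrees with the flat Laplacian and the classical results apply directly; near each vertex the wedge-doubling identification used in the proof of Lemma~\ref{lemma:wedgesmoothing_global} reduces the question to the Sobolev and b-Sobolev calculus on $\cone$ as discussed in \cite{Blair:2012}. Restricting to $0 \leq \sigma < 1$ is natural in this framework, since it stays below the threshold $s=1$ at which the identification $\dom_s = r^{-1+s}\Hb^s(\cone)$ breaks down and at which the boundary conditions begin to obstruct naive composition with polynomial nonlinearities.
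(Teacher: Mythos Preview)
Your proposal matches the paper's approach: both run the standard Kato--Duhamel contraction in a Strichartz-type space $Y_T$, using Theorem~\ref{thm:strichartz} and \eqref{eq:inhomStrich} in place of the flat estimates, and both flag the fractional Sobolev/product calculus near the cone points as the real issue for $0<\sigma<1$. The only notable difference is organizational: the paper treats $\sigma=0$ explicitly, separates out uniqueness via a dedicated lemma, and candidly states that the $0<\sigma<1$ case requires Besov-type spaces on $\cone$ and ``goes beyond the scope of this note,'' whereas you sketch a uniform argument and propose the b-Sobolev identification $\dom_s = r^{-1+s}\Hb^s(\cone)$ as the route---so your discussion of that obstacle is, if anything, slightly more concrete than the paper's.
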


\begin{proof}

We discuss the proof in some special cases, citing proper references for further details. 

Given the conservation laws, if $\sigma = 1$, the results follow from
careful estimates using the density of smooth functions in $H^1 (X)$ (see
Theorem~3.3.5 in \cite{Cazenave}).  Hence, the primary contribution to
well-posedness theory easily derived from the Strichartz estimates is the
component of uniqueness of the evolution (see Theorems~4.3.1 and 3.3.9
in \cite{Cazenave}).  

\begin{lemma}
Given the assumptions on $\beta$, there exist $\rho_1$, $\rho_2 \in [2,\infty)$ such that for each $0< M < \infty$, there exists $C(M) < \infty$ such that
\[ 
\| \beta( |u|^2) u - \beta(|v|)^2 v \|_{L^{\rho_1'}} \leq C(M) \| u-v \|_{L^{\rho_2}}
\]
for all $u,v \in H^1 (X)$ such that $\| u \|_{H^1}, \|v\|_{H^1} \leq M$.  As a result, if $u$ and $v$ are weak solutions of \eqref{e:nls} on a time interval $I$ with initial data $u_0 \in H^1$, then $u = v$.    
\end{lemma}

The lemma follows by applying the Strichartz estimates to the equation for $u-v$ to obtain an estimate
\begin{align}
\| u-v \|_{L^q L^r} \leq C \| u-v \|_{L^{q'} L^r} 
\end{align}
for $(q,r)$ an admissible Strichartz pair and the observation using H\"{o}lder's inequality that if for $0 \in I$, a finite interval,
\[
\| f \|_{L^b(I)} \leq C \| f \|_{L^a (I)}.
\]
with $b>a$, then $f = 0$ almost everywhere in $I$.

Let us now consider $\sigma = 0$.  Let us take 
\begin{equation}
\label{e:betasimple}
\beta (z^2) = g z^{\rho-1}, 
\end{equation} 
for $\rho >1$, $g \in \reals$.  Since we are in two dimensions, for  $1 < \rho < 3$, using Theorem
\ref{thm:strichartz} and \eqref{eq:inhomStrich}, we are able to apply
standard bootstrapping arguments to such nonlinear problems, which we
include here for completeness.  We take instead of
\eqref{e:nls} the corresponding integral equation\begin{eqnarray*}
S (u) = e^{-i t \Delta} u_0 + i \int_0^t e^{-i (t-s) \Delta} \beta ( |u|^2) u (s) ds,
\end{eqnarray*}
which we show serves as a contraction mapping $S: Y_T \to Y_T$, for 
\begin{eqnarray*}
Y_T = C ([-T,T], L^2 (X)) \cap L^p ([-T,T], L^q (X)
\end{eqnarray*}
with $p,q$ an allowed Strichartz pair.  Indeed, from \eqref{eq:inhomStrich}
and H\"older's inequality as the equation is by scaling invariance computed to be sub-$L^2$-critical there
exists $p_1$, $q_1$ Strichartz pairs such that  
\begin{eqnarray*}
\left\{ \begin{array}{c} 
p_1' \rho < p,  \  q_1' \rho = q, \\
\frac{2}{p} + \frac{2}{q} = 1, \ \frac{2}{p_1} + \frac{2}{q_1} = 1, \\
p,p_1 \in [2, \infty], \ q, q_1 \in [2,\infty)
\end{array} \right.
\end{eqnarray*}
and 
\begin{eqnarray*}
\| S(u) \|_{Y_T}  \leq  C( \| u_0 \|_{L^2} + T^\rho \| u \|_{Y_T}), 
\end{eqnarray*}
\begin{eqnarray*}
\| S(u) - S(v) \|_{Y_T}  \leq   \tilde C T^\lambda \| u-v \|_{Y_T} \| |u| + |v| \|_{Y_T}^{\rho-1}.
\end{eqnarray*}
See, for instance, the discussion in \cite{Cazenave}, Section~4.6 for a detailed example of how to apply the bootstrapping principle once such bounds of the solution map are proved.
Hence, for each $u_0 \in L^2$, there exists a time interval $T$ and an upper bound $U$ such that given $u,v \in B(0,U) \subset Y_T$, we have
\begin{eqnarray*} 
S : B(0,U) \to B(0,U), \\
\| S(u) - S(v) \|_{Y_T} \leq \frac12 \| u - v \|_{Y_T}.
\end{eqnarray*}

For $0 < \sigma < 1$, in $\reals^2$, interpolative results up the $H^s$ critical exponent $\rho = 1 + 2/(1-\sigma)$.  hold in Besov type spaces in which it is simple to take advantage of the Sobolev embeddings in $H^s$, see for instance Section~4.9 of \cite{Cazenave}.  Such spaces can be defined on $\cone$, but doing so goes beyond the scope of this note.  \end{proof}

\subsection{Global Nonlinear Results}

Let us take $\beta$ as in \eqref{e:betasimple} for simplicity and assume
Dirichlet boundary conditions.  A remaining open problem is to determine
how much of the above linear analysis can be extended to domains $X$ with
Neumann boundary conditions; this would permit us to address global
nonlinear questions in the Neumann case as well.
  
For the $L^2$ sub-critical case $1 < \rho < 3$, the natural $L^2$
conservation gives global well-posedness in $L^2$ for such equations
by simply iteration of the argument over this uniform time interval.  Up to
this point, our analysis has cared very little about the leading order sign
in the nonlinearity, which is generally irrelevant to finite time results.
For the $L^2$ critical/supercritical case ($3 \leq \rho < \infty$), however, one must
rely upon the natural $H^1$ conservation laws of such a system; such
results are known to hold provided the
nonlinearity is defocusing ($g > 0$)---see \cite{Cazenave}, Theorem~6.1.1, taking $\sigma = 1$.

For the defocusing case ($g>0$), global well-posedness in $H^1$ holds
immediately for any initial data for all powers of the nonlinearity since
the positive conserved energy in \eqref{e:energy} means that the $H^1$ norm
can bounded for all time at any scale.  For the focusing case ($g < 0$),
global well-posedness is a subtle phenomenon owing to the potential
existence of nonlinear bound states.  With sufficient regularity in weighted
spaces, a calculation by Weinstein \cite{Weinstein:1982} showed that in the
case $\rho = 3$ on $\reals^2$ there is a finite threshold of $L^2$ mass
below which a solution exists for all time.  The threshold is related to a
nonlinear bound state that gives an optimal constant for the
Gagliardo-Nirenberg inequality
\begin{eqnarray*}
\| f \|_{L^p (X)} \leq C_{opt} (X) \| u \|^{\alpha (p)}_{\dot H^1 (X)} \| u \|^{1-\alpha(p)}_{L^2 (X)},
\end{eqnarray*} 
where $p < \infty$ in two dimensions.\footnote{It would be of interest to see how
such a calculation translates to the settings of exterior polygonal domains
and in particular if the relevant Gagliardo-Nirenberg constant changes at
all on product cones or polygonal exterior domains.}  Following \cite{Cazenave}, Theorem $6.1.1$, if one can use the conservation of energy and mass to provide an a priori bound on the $H^1$ norm throughout the evolution, then the methods sketched
above yield the following theorem about Schr\"odinger evolution on $X,$
where we
employ the notation from Theorem~\ref{t:locwp}:
\begin{theorem}
\label{t:globwp}
Assume that there exists $0 < \epsilon < 1$, $M>0$ and $0<C(M)$ such that 
\begin{equation}
\left| \int_X B(u^2) d \Vol_X \right| \leq  (1- \epsilon) \| u \|_{H^1 (X)}^2 + C(M)
\end{equation}
for $u \in H^1 (X)$ such that $\| u \|_{L^2 (X)} \leq M$.  Then, given $\| u_0 \|_{L^2} \leq M$ in \eqref{e:nls}, we have $T_{min} = T_{max} = \infty$ in Theorem \ref{t:locwp}.
\end{theorem}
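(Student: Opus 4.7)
The plan is to combine the local well-posedness from Theorem~\ref{t:locwp} at regularity $\sigma=1$ (the case appealed to at the outset of its proof via Cazenave Theorem~3.3.5) with the blow-up alternative asserted in that theorem: if $T_{\max}<\infty$ then $\|u(t)\|_{H^1(X)}\to\infty$ as $t\uparrow T_{\max}$, and symmetrically for $T_{\min}$. Thus the entire task reduces to producing an a priori bound on $\|u(t)\|_{H^1(X)}$ that is uniform on the maximal interval of existence.

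First, I would use conservation of mass \eqref{e:mass} to note that $\|u(t)\|_{L^2(X)}=\|u_0\|_{L^2(X)}\leq M$ for every $t$ in the interval of existence, so the hypothesis on the potential term applies pointwise in time to $u(t)$. Next, I would use conservation of energy \eqref{e:energy}, rewritten as
\begin{equation*}
\|\nabla u(t)\|_{L^2(X)}^{2} = E(u_{0}) - \int_{X} B(u(t)^{2})\, d\Vol_{X}.
\end{equation*}
Applying the standing hypothesis and adding $\|u(t)\|_{L^{2}(X)}^{2}\leq M^{2}$ to both sides gives
\begin{equation*}
\|u(t)\|_{H^{1}(X)}^{2} \leq M^{2} + E(u_{0}) + (1-\epsilon)\|u(t)\|_{H^{1}(X)}^{2} + C(M),
\end{equation*}
which after absorbing the $(1-\epsilon)$ term yields the desired uniform bound
\begin{equation*}
\|u(t)\|_{H^{1}(X)}^{2}\leq \tfrac{1}{\epsilon}\bigl( M^{2}+E(u_{0})+C(M)\bigr).
\end{equation*}
The blow-up alternative then forces $T_{\min}=T_{\max}=\infty$.

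The main obstacle, and the only point that requires care in the polygonal-domain setting, is the rigorous justification of the conservation laws \eqref{e:mass} and \eqref{e:energy} for $H^{1}$ solutions produced by the Strichartz-based contraction argument of Theorem~\ref{t:locwp}. The standard route is to approximate $u_{0}$ by a sequence $u_{0,n}$ of smoother data (say in the domain of the Friedrichs Laplacian, where mass and energy conservation follow directly from the self-adjointness of $\Delta$ and integration by parts), apply conservation to the approximating solutions, and pass to the limit using the continuous-dependence statement already in Theorem~\ref{t:locwp}. This is the content of Cazenave Theorem~3.3.5, and the only domain-specific ingredient is density of smooth compactly supported functions in the energy space for the Dirichlet Laplacian on $X$, which is standard for the polygonal domains under consideration. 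Once this is in hand, the a priori bound above together with the blow-up alternative closes the argument.
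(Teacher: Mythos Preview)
Your proposal is correct and follows precisely the route the paper indicates: the paper does not give a detailed proof but simply refers to Cazenave, Theorem~6.1.1, noting that conservation of mass and energy yield an a priori $H^{1}$ bound which, combined with the blow-up alternative of Theorem~\ref{t:locwp}, forces $T_{\min}=T_{\max}=\infty$. Your write-up supplies the explicit arithmetic of the absorption argument and the approximation justification of the conservation laws, both of which the paper leaves to the cited reference.
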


As we have \emph{global}-in-time Strichartz estimates, it is natural to
pursue the question of \emph{scattering} of solutions with general critical
or supercritical nonlinearities in two settings:
\begin{itemize}\item In the focusing case, 
for small enough
data in $H^1$ (possibly with the condition of finite variance or $\| x u
\|_{L^2 (X)}^2 < \infty$),
\item In the defocusing case, for general data.
\end{itemize}
Note that a scattering state, say $u_+$, can be easily seen to
depend upon global dispersive results in the sense that global existence
implies that in the $H^1$ norm we construct
  \begin{align*}
  u_+ &= \lim_{t \to \infty} e^{-i t \Delta} u (x,t) \\
  & = u_0 - i \int_0^\infty e^{-i s \Delta} \beta ( |u|^2)u (s) ds ,
  \end{align*}
  provided the integral is bounded.  See for instance Tao \cite{Tao:2006a},
  Chapter~3.6 for a discussion.  We consequently propose the following:
  \begin{conjecture}
  Given $\epsilon > 0$ sufficiently small and $u_0 \in H^1$, $\| u_0 \|_{H^1 (X)}  < \epsilon$, there exist $u_+$, $u_- \in H^1$ such that given $u$ a global solution to \eqref{e:nls}, we have
  \begin{equation*}
  \| u(t) - e^{ i t \lap} u_{\pm}  \|_{H^1} \to 0
  \end{equation*}
  as $t \to \pm \infty$ with
  \begin{equation*}
  M(u_+) = M(u_-) = M(u_0) \ \ \text{and} \ \ \| \nabla u_+ \|_{L^2 (X)}^2 = \| \nabla u_- \|_{L^2 (X)}^2 = E(u_0).
 \end{equation*}
 \end{conjecture}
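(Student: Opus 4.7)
The plan is a small-data scattering argument built on the global-in-time Strichartz estimates of Theorem~\ref{thm:strichartz} lifted to the $H^1$ level. First I would set up a contraction mapping for the Duhamel integral equation
\[
S(u)(t) = e^{-it\lap}u_0 - i \int_0^t e^{-i(t-s)\lap}\beta(|u|^2)u(s)\, ds
\]
on an appropriate $H^1$-level global Strichartz space $Y \hookrightarrow L^\infty(\RR; \dom_1) \cap L^p(\RR; L^q(X))$ for a suitably chosen admissible pair $(p,q)$. Using H\"older's inequality together with Sobolev embeddings $\dom_1 \hookrightarrow L^r(X)$ for all $r < \infty$ (available since $X$ is two-dimensional and $\dom_1$ is locally $H^1$ away from the vertices), one sees that for $\|u_0\|_{H^1(X)} < \epsilon$ sufficiently small, $S$ preserves and contracts a small ball in $Y$, yielding a unique global solution $u$ with $\|u\|_Y \leq 2C\epsilon$.

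With the global $Y$-bound in hand, scattering follows from a standard Cauchy-sequence argument. Applying $e^{it\lap}$ and invoking Duhamel, the tail
\[
\int_{T_1}^{T_2} e^{is\lap}\beta(|u|^2)u(s)\,ds
\]
is estimated in $\dom_1$ by the dual global Strichartz inequality and then by H\"older as a power of $\|u\|_{L^p(T_1,T_2;L^q)}$, which vanishes as $T_1\to\infty$ because $u\in L^p(\RR;L^q)$. Defining $u_+ = u_0 - i\int_0^\infty e^{is\lap}\beta(|u|^2)u\,ds$ (and running the argument in reverse for $u_-$), one obtains the convergence $\jwnorm{u(t) - e^{-it\lap}u_\pm}_{\dom_1}\to 0$. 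Conservation of $M$ then passes to the limit by unitarity of $e^{-it\lap}$ on $L^2$ combined with mass conservation for $u$; the identity $\jwnorm{\nabla u_\pm}_{L^2}^2 = E(u_0)$ follows from $\jwnorm{\nabla e^{-it\lap}f}_{L^2} = \jwnorm{\nabla f}_{L^2}$ (by Dirichlet boundary conditions and integration by parts), together with the dispersive fact that $\jwnorm{e^{-it\lap}u_\pm}_{L^{\rho+1}}\to 0$, which consumes the potential-energy term in the limit.

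The principal obstacle is exactly the first step: promoting Theorem~\ref{thm:strichartz} to the $H^1$ setting on $X$. The natural space here is $\dom_1$, the domain of $(-\lap_D)^{1/2}$, which on an exterior polygonal domain differs subtly from $H^1(X)$ because of the vertex contribution to the domain characterization, in parallel with the identification $\dom_s = r^{-1+s}\Hb^s$ for $|s|<1$ from Section~\ref{section:locsmoothing} and the breakdown at $s=1$ flagged there. Two plausible routes are: (i) work intrinsically with $\dom_1$, developing Sobolev embeddings and a Kato--Ponce-type fractional chain rule for the Friedrichs Dirichlet Laplacian on $X$ along the lines of the conic analysis of \cite{Blair:2012}, exploiting the polynomial structure of $\beta$; or (ii) absorb the derivative cost near the obstacle into the global local-smoothing estimate of Theorem~\ref{thm:local-smoothing_global}, thereby reducing the problem to the standard $\reals^2$ scattering theory away from $P$. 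In either case the corner contributions, together with the dispersive decay of the free Dirichlet propagator on $X$ needed for the energy identity, constitute the essential new content beyond the flat-space theory and explain why the result is posed here only as a conjecture.
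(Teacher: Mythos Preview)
The statement you are addressing is explicitly labeled a \emph{Conjecture} in the paper, and the paper does not supply a proof. After stating it, the authors only point to related literature (Nakanishi \cite{Nakanishi:1999} for $\RR^2$, Planchon--Vega \cite{PlanchonVega:2011} for smooth star-shaped obstacles, the interaction-Morawetz references, and the pseudoconformal approach), remarking that some of those techniques ``should extend'' to polygonal exteriors but without carrying this out. So there is no paper proof to compare your proposal against.

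That said, your proposal is a reasonable and honest sketch of the standard small-data scattering scheme, and you correctly isolate the genuine obstruction: the Strichartz estimates of Theorem~\ref{thm:strichartz} are proved at the $L^2$ level, and lifting them to $\dom_1$ on an exterior polygonal domain runs into exactly the breakdown of the identification $\dom_s = r^{-1+s}\Hb^s$ at $s=1$ flagged in Section~\ref{section:locsmoothing}. Your two suggested routes (intrinsic $\dom_1$ analysis with a conic fractional Leibniz rule, or absorbing the derivative via the global smoothing of Theorem~\ref{thm:local-smoothing_global}) are plausible but each requires new ingredients not supplied in the paper; the dispersive decay $\jwnorm{e^{-it\lap}u_\pm}_{L^{\rho+1}}\to 0$ you invoke for the energy identity is likewise not established here for the polygonal exterior. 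In short, your write-up is not a proof but a correct diagnosis of why the statement remains a conjecture, which matches the paper's own stance.
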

 
 For more details,\footnote{Excellent
 introductory treatments of scattering are available in \cite{Cazenave},
 Sections~7.8 and 7.9 and \cite{SulemSulem}, Section~3.3.} we refer the reader to the treatment of scattering
 theory in higher dimensions of Strauss \cite{Strauss1}, \cite{Strauss2}
 and in particular to the treatment by Nakanishi in \cite{Nakanishi:1999},
 where the result was obtained in $\reals^2$.  See also \cite{CHVZ} for a
one-dimensional scattering result and \cite{Colliander:2009},
 \cite{PlanchonVega:2009} for two- and three-dimensional scattering results
 using Interaction Morawetz style estimates, which have potential for
 applying to exterior domains.  In two dimensions with a \emph{smooth}
 star-shaped obstacle, scattering is proved in 
 \cite{PlanchonVega:2011}; as the techniques there rely only upon integration by
 parts, they should extend to star-shaped polygonal domains (and possibly a
 broader class of non-trapping polygons) as well.  In the case of
 finite variance on $\RR^2$, this result follows from an extension of the
 pseudoconformal transformation, which is based upon a natural commuting
 vector field constructed from the Hamilton flow defined by the
 Schr\"odinger operator globally on $\reals^2$; thus we do not currently
 have a version of this conservation law for exterior domains $X$.
 
 Scattering can be shown in the defocussing case for any initial data given that the nonlinearity is critical/supercritical using a variant of the Morawetz estimates, which give local energy decay in the form of an estimate
 \begin{equation}
 \label{e:morawetz}
\int_X \frac{ |u|^p}{|x|^q} dx \leq \left( \frac{p}{2-q} \right)^q \| u \|_{L^p}^{p-q} \| \nabla u \|_{L^p}^q
\end{equation}
for $u \in W^{1,p} (X)$ where $q \leq 2$ and $0 \leq q \leq p$, $1 \leq p <
\infty$.  In recent works, such questions have been approached on
$\reals^2$ for less regularity using concentration compactness techniques
and interaction Morawetz estimates in the works of
\cite{Dodson:2010,Killip:2009}; one might hope to
generalize these results to product cones and exterior domains.

\bibliographystyle{abbrv}

\bibliography{papers}

\end{document}